\newtheoremstyle{mytheorem}
  {3pt}
  {3pt}
  {\itshape}
  {}
  {\bfseries}
  {.}
  {1em}
  {}
\theoremstyle{mytheorem}
\newtheorem*{theorem}{Theorem}
\newtheorem*{thm}{Theorem (Rosenberg-Stolz)}
\newtheorem*{thm 2}{Iso-enlargeable [$\asymp \frac{8\pi^2}{R^2}$]-Decay  Theorem}
\theoremstyle{remark}
\newtheorem{remark}{Remark}
\begin{document}
\title{A note about scalar curvature on the total space of a vector bundle}
\author{Jialong Deng}
\date{}
\newcommand{\Addresses}{{
  \bigskip
  \footnotesize

  \textsc{Mathematisches Institut, Georg-August-Universität, Göttingen, Germany}\par\nopagebreak
  \textit{E-mail address}: \texttt{jialong.deng@mathematik.uni-goettingen.de}}}
\maketitle
\begin{abstract}
 We construct complete Riemannian metrics to show that the total space of  tangent bundles of orientable closed surfaces (except torus) admits complete uniformly PSC-metrics. It gives a partial  positive answer to one of Gromov's question. 
\end{abstract}

 The scalar curvature of a Riemannian metric on closed manifolds has been deeply studied (see $ \cite{MR1268192}$, $\cite{MR2408269}$,$\cite{MR3728662}$ and  references therein) for the reason of geometry and general relativity. In this paper, we consider the scalar curvature of a Riemannian metric on the total space of tangent bundle of the orietnable closed surfaces. It is a classic result that any noncompact Riemannian surface admits complete Riemannian metrics with constant negative scalar curvature. Bland and Kalka $\cite{MR987159}$ generalized it to manifolds of any dimension: any noncompact manifold of dimensions at least three admits complete Riemannian metrics with constant negative scalar curvature. In contrast to the negative scalar curvature, it exists obstructions to admit a complete Riemannian metric with positive scalar curvature (PSC-metric) on open manifold. If one does not require completeness, then the curvature will not restrict the topology of open manifold according to Gromov's Theorem \cite{MR0263103}. Gromov and Lawson [$\cite{MR720933}$, Corollary 6.13] proved that $X\times \mathbf{R}$ carries no complete PSC-metric. Here $X$ is a  closed  enlargeable manifold [$\cite{MR720933}$, Definition 5.5]. For instance, closed Riemannian manifolds with nonpositive sectional curvature are enlargeable manifolds. There is a widely open conjecture: for closed manifold $M^{n}$, $M^{n}\times \mathbf{R}$ admits a complete PSC-metric if and only if $M^{n}$ admits PSC-metrics. Gromov and Lawson also showed in $\cite{MR720933}$ that hyperbolic manifold times $ \mathbf{R}^{2}$ does not admit complete uniformly PSC-metric.  However, if the fiber is the vector space $\mathbf{R}^n$($n\geq 3$), the total space of vector bundles (trivial or nontrivial) will admit complete uniformly PSC-metric, as $\mathbf{R}^n$ admits $O(n)$-invariant complete uniformly PSC-metrics.\par
  Rosenberg and Stolz [$\cite{MR1268192}$, proposition 7.2] described a proof that $M^{n}\times \mathbf{R}^{2}$ admits complete PSC-metrics, where $M^{n} $ is a closed manifold. Moreover, Gromov proved the following theorem in [\cite{MR3816521}, section 4]:

\begin{thm 2}

Let a manifold $X$ admit a proper map of non-zero degree to  $Y \times \mathbf{R}^{2}$, where $Y$ is a compact iso-enlargeable manifold, then the scalar curvature of all complete Riemannian metric $g$ in $X$ restricted to concentric balls $B(R)=B_{x_0}(R)\subset X$ satisfy
   $\min\limits_{B(R)} Sc(X)\leq\frac{8\pi^2}{(R-R_0)^2}$ for some $R_0=R_0(X,g,x_0)$ and all $R\geq R_0$.
\end{thm 2}
Riemannian manifolds with non-positive sectional curvature are examples of iso-enlargeable manifolds and the definition of the iso-enlargeable manifold can be found in [\cite{MR3816521}, section 4, p.658]. Gromov posted a question in [\cite{MR3816521}, section 6]:\par
 \textit{ Do all products manifold $Y\times \mathbf{R}^{2}$, and, more generally, the total spaces of all $\mathbf{R}^{2}$-bundles admit complete metric $g$ with $Sc(g)\geq 0$, i.e. scalar curvature $\geq 0$? Do, for example, such metrics $g$ exist for compact manifold $Y$ which admit metrics with strictly negative sectional curvature?}\par
   Gromov also pointed out in the paper that the best candidates of this kind of manifolds with no complete PSC-metric on them are non-trivial $\mathbf{R}^{2}$-bundles over surfaces of genus at least 2. However, it may has a positive answer.\par

\begin{thm}
If $Y$ is a smooth manifold without boundary (compact or noncompact), then $Y \times \mathbf{R}^{2}$ admits complete PSC-metrics.

\end{thm}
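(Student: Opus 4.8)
The plan is to write down an explicit complete metric on $Y \times \mathbf{R}^{2}$ with positive scalar curvature, in the spirit of the proof of Rosenberg and Stolz [\cite{MR1268192}, Proposition 7.2], together with a preliminary reduction to handle the noncompact case.

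First I would fix a convenient background metric on $Y$. If $Y$ is compact, any metric $g_{Y}$ satisfies $Sc(g_{Y}) \geq -C$ for some constant $C$, and rescaling $g_{Y}$ makes $C$ as small as we please. If $Y$ is noncompact I would instead fix a \emph{complete} metric $g_{Y}$ on $Y$ with scalar curvature bounded below --- for $\dim Y \geq 3$ one may use a complete metric of constant negative scalar curvature, which exists by Bland and Kalka [\cite{MR987159}], and for $\dim Y \leq 2$ a complete hyperbolic or flat metric --- and then rescale so that $Sc(g_{Y}) \geq -1$. After this reduction the situation is uniform: $(Y, g_{Y})$ is complete and $Sc(g_{Y}) \geq -1$.

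Next comes the metric on $Y \times \mathbf{R}^{2}$. Writing $\mathbf{R}^{2} \setminus \{0\} = (0, \infty)_{r} \times S^{1}_{\theta}$, the first thing to try is the doubly warped product $g = dr^{2} + \phi(r)^{2}\, d\theta^{2} + \psi(r)^{2}\, g_{Y}$, capped off smoothly at $r = 0$; such a $g$ is automatically complete. Its scalar curvature is
\[
Sc(g) = \frac{Sc(g_{Y})}{\psi^{2}} - \frac{2\phi''}{\phi} - \frac{2m\,\psi''}{\psi} - \frac{m(m-1)\,(\psi')^{2}}{\psi^{2}} - \frac{2m\,\phi'\psi'}{\phi\,\psi}, \qquad m = \dim Y,
\]
where $-2\phi''/\phi$ is twice the Gaussian curvature of the surface of revolution $dr^{2} + \phi(r)^{2} d\theta^{2}$. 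One wants $\psi \to \infty$, so that the fibre term $Sc(g_{Y})/\psi^{2} \geq -\psi^{-2}$ is eventually negligible, and $\phi$ chosen so that $-2\phi''/\phi$ is the leading positive term; near $r = 0$, where $\psi$ is essentially constant and $\phi \approx r$, positivity is arranged directly.

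The hard part, and the step I expect to be the real obstacle, is making all the terms cooperate. Completeness of the $\mathbf{R}^{2}$-factor forces the surface of revolution to stay non-singular on the whole ray, and a Sturm comparison then prevents $-\phi''/\phi$ from decaying more slowly than order $r^{-2}$ (consistent with the decay theorem above), so the positive curvature must decay; on the other hand, overcoming the fibre term needs $\psi$ to grow at least linearly in $r$, and then the warping errors $m(m-1)(\psi'/\psi)^{2}$ and $2m\,\phi'\psi'/(\phi\psi)$ are themselves of order $r^{-2}$, of the same size as everything $-2\phi''/\phi$ can provide. So the rigid rotationally symmetric Ansatz does not close by itself, and one must be more flexible --- for instance replacing $\psi(r)^{2} g_{Y}$ by a suitably chosen path $g^{(r)}_{Y}$ of metrics on $Y$ (allowing the $Y$-slices non-umbilic second fundamental forms), or, following Gromov and Lawson, building $g$ over an exhaustion $Y \times \mathbf{R}^{2} = \bigcup_{k}(Y \times D^{2}_{k})$ one shell at a time, so that the shell at distance about $R$ from the basepoint resembles a controlled rescaling of $(Y, g_{Y})$ times a large round $2$-sphere --- possible because $Y \times S^{2}$ carries a positive-scalar-curvature metric for every $Y$ (shrink the sphere factor) --- and then gluing consecutive shells. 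The crux is to organize this so that the necessarily decaying positive curvature dominates, at every point and all at once, both the bounded negative curvature of $Y$ and all of the transition terms; once that is arranged, completeness is immediate from the structure of the exhaustion.
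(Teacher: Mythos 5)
Your opening reduction --- fix a complete background metric on $Y$ with scalar curvature bounded below, via Bland--Kalka in dimension at least three and constant-curvature metrics in dimensions one and two --- is precisely what the paper does in its Remark, and it is in fact the \emph{only} part of the argument the paper supplies itself: for the core construction the paper simply invokes Rosenberg--Stolz [Proposition 7.2] and asserts that their argument needs nothing about $Y$ beyond completeness and a lower scalar curvature bound. Your write-up, however, neither quotes that construction as a black box nor carries it out, and that is a genuine gap, not a presentational one. Your own analysis of the rotationally symmetric doubly warped Ansatz is sound as far as it goes: non-vanishing of $\phi$ caps the positive term $-2\phi''/\phi$ at order $r^{-2}$, beating $Sc(g_Y)/\psi^{2}$ forces $\psi$ to grow at least linearly, and then $m(m-1)(\psi'/\psi)^{2}$ and $2m\,\phi'\psi'/(\phi\psi)$ sit at the same order $r^{-2}$, so the rigid Ansatz does not visibly close. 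But having diagnosed this, you stop exactly where the theorem lives: the two proposed repairs --- a non-homothetic path $g^{(r)}_{Y}$ of fiber metrics, or a shell-by-shell construction --- are never executed, and the decisive estimate is explicitly deferred (``the crux is to organize this so that \dots''). That deferred estimate \emph{is} the content of the statement.

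Two further cautions. First, the shell picture as you state it is not a ready-made fix: a large round $2$-sphere of radius $R$ has scalar curvature of order $R^{-2}$, which does not dominate $\inf Sc(g_{Y})<0$ unless $g_{Y}$ is simultaneously rescaled, and that rescaling reintroduces exactly the transition terms you were trying to escape; the PSC metrics on $Y\times S^{2}$ obtained by shrinking the sphere are uniformly positive, but shells of $Y\times\mathbf{R}^{2}$ are products with annuli, and the whole difficulty is concentrated in how consecutive shells (with different scales) are joined. Second, to match the paper's level of rigor it would have sufficed to cite Rosenberg--Stolz's Proposition 7.2 for the closed (or scalar-curvature-bounded-below) case and add your Bland--Kalka reduction; if instead you want a self-contained proof, you must actually produce and verify the more flexible construction you gesture at, since your own computation shows the naive warped product cannot be the final word.
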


\begin{remark}
Though Rosenberg and Stolz [\cite{MR1268192}, proposition 7.2, p.263] only deal with the case of closed manifold times  $\mathbf{R}^{2}$, their argument can be used to here. Since the open manifold can be endowed with a complete Riemannian metric with constant negative scalar curvature by the theorem of  Bland and Kalka $\cite{MR987159}$.  Thus the theorem is attributed to Rosenberg and Stolz. 
\end{remark}

\begin{theorem}
The total space of tangent bundle of orientable closed surface (except torus) admits complete uniformly PSC-metrics.
\end{theorem}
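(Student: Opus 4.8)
The plan is to treat $S^2$ and the surfaces $\Sigma_g$ of genus $g\ge 2$ in parallel, exploiting the bundle structure $TS\to S$; recall that $TS$ is a nontrivial oriented $\mathbf{R}^2$-bundle (Euler number $\chi(S)\ne 0$) and a $4$-manifold homotopy equivalent to $S$. Since every $\Sigma_g$ with $g\ge 2$ finitely covers $\Sigma_2$ (take the kernel of a surjection $\pi_1(\Sigma_2)\to \mathbf{Z}/(g-1)$) and completeness together with a uniform lower scalar-curvature bound pull back along finite coverings, it suffices to produce the metric on $TS^2$ and on $T\Sigma_2$ (equivalently on $T\Sigma_g$ for one fixed $g\ge 2$).

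For $S=S^2$ I would put on $TS^2\to S^2$ the connection metric $g=\pi^{*}g_{\rho}\oplus g^{\mathrm{fib}}_{\delta}$, where $g_{\rho}$ is the round metric of small radius $\rho$ on $S^2$ (so $Sc(g_{\rho})=2/\rho^{2}$), the horizontal distribution is that of the Levi-Civita connection (structure group $SO(2)$), and each fibre $\cong\mathbf{R}^2$ carries the $SO(2)$-invariant ``torpedo'' metric $g^{\mathrm{fib}}_{\delta}$: a spherical cap of curvature $1/\delta^{2}$ near the zero section, smoothly straightened to the flat half-cylinder $S^1(\delta)\times[0,\infty)$ at infinity, which is complete and has $K\ge 0$. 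Being $SO(2)$-invariant, the fibre metric is parallel, so the fibres are totally geodesic, and the O'Neill integrability tensor stays bounded: the connection curvature sends a fibre vector $v$ to a rotation of $v$, which lies in the $S^1$-direction of the torpedo whose norm is contracted by the factor $\delta/|v|$, exactly cancelling the linear growth in $|v|$. The O'Neill scalar-curvature formula then gives $Sc(g)=\tfrac{2}{\rho^{2}}+Sc(g^{\mathrm{fib}}_{\delta})-|A|^{2}\ge \tfrac{2}{\rho^{2}}-C\delta^{2}/\rho^{4}\ge\varepsilon_{0}>0$ once $\delta\ll\rho$; the metric is complete because the base is compact and the fibres are complete. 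This disposes of $TS^2$.

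For $S=\Sigma_g$ with $g\ge 2$ this recipe fails, since $\Sigma_g$ admits no metric with $Sc\ge 0$ by Gauss--Bonnet, and at infinity in a fibre the fibre term tends to $0$ and cannot absorb the unavoidable negative base curvature. Instead I would use that $\Sigma_g$ is stably parallelizable — it embeds in $\mathbf{R}^{3}$ with trivial normal bundle, so $T\Sigma_g\oplus\underline{\mathbf{R}}\cong\underline{\mathbf{R}^{3}}$. Fixing such a trivialization, equivalently an immersion $\iota\colon\Sigma_g\to\mathbf{R}^{3}$ with Gauss map $n\colon\Sigma_g\to S^2$, identifies the total space of $T\Sigma_g$ with the hypersurface $H=\{(x,v)\in\Sigma_g\times\mathbf{R}^{3}\colon\langle v,n(x)\rangle=0\}$, equivalently $T\Sigma_g\cong(\Sigma_g\times S^2)\setminus\mathrm{graph}(-n)$. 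Now $\Sigma_g\times\mathbf{R}^{3}$ carries complete metrics $g_W$ of uniformly positive scalar curvature — e.g. $g_{\Sigma}+g^{(3)}_{\delta}$ with $g^{(3)}_{\delta}$ a $3$-dimensional torpedo of small radius, so that $Sc(g_W)=Sc(g_{\Sigma})+Sc(g^{(3)}_{\delta})\ge -C+2/\delta^{2}>0$ — and one chooses $g_W$ invariant under the fibrewise reflection $\sigma(x,v)=(x,\,v-2\langle v,n(x)\rangle\,n(x))$; then $H=\mathrm{Fix}(\sigma)$ is totally geodesic and the induced metric $g_H$ on $T\Sigma_g$ is complete (being properly embedded). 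The Gauss equation gives $Sc(g_H)=Sc(g_W)-2\,\mathrm{Ric}_{g_W}(N,N)$ along $H$, with $N$ the unit normal.

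I expect this last scalar-curvature estimate to be the main obstacle, and for a structural reason: the end of $T\Sigma_g$ is modelled on $ST\Sigma_g\times\mathbf{R}$ with $ST\Sigma_g=\Gamma\backslash\widetilde{SL_2\mathbf{R}}$, and $ST\Sigma_g$ carries no metric of positive scalar curvature, so near infinity $g_H$ cannot be a Riemannian product with a fixed PSC cross-section — the ambient $g_W$ must be tuned so that, along $H$, the normal $N$ points into a direction of small ambient Ricci curvature while $Sc(g_W)$ stays uniformly positive (for the naive product torpedo $\mathrm{Ric}_{g_W}(N,N)$ is comparable to $Sc(g_W)$ near the end and $Sc(g_H)$ degenerates). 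Concretely one should choose the $\mathbf{R}^{3}$-factor metric adapted to the splitting $\mathbf{R}^{3}=n(x)^{\perp}\oplus\mathbf{R}\,n(x)$ and take $\iota$ with $\|dn\|$ uniformly small (possible at the cost of large area), thereby steering $N$ into the small-Ricci directions; alternatively one could try to realize $H$ as a stable minimal hypersurface in $(\Sigma_g\times\mathbf{R}^{3},g_W)$ and invoke the Schoen--Yau descent, which yields a complete uniformly PSC metric conformal to $g_H$, the difficulty then shifting to the verification of stability. Reconciling uniform positivity near the end with the absence of PSC on $ST\Sigma_g$ is, I believe, the heart of the argument.
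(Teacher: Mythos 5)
Your reduction of the higher-genus cases to one fixed genus via finite covers is fine (a complete uniformly PSC metric pulls back along the covering $T\Sigma_g\to T\Sigma_2$), and your connection-metric-with-torpedo-fibre construction over the small round $S^2$ is a plausible route for $TS^2$ (the paper instead simply quotes the known fact that the Cheeger--Gromoll metric on $TS^2$ is a complete uniformly PSC metric, indeed of positive Ricci curvature). But the theorem's real content is the genus $\geq 2$ case, and there your argument has a genuine gap that you yourself flag: after realizing $T\Sigma_g$ as the totally geodesic fixed-point set $H$ of a reflection in $(\Sigma_g\times\mathbf{R}^3, g_W)$, the Gauss equation gives $Sc(g_H)=Sc(g_W)-2\,\mathrm{Ric}_{g_W}(N,N)$, and you never produce an ambient metric for which the right-hand side is uniformly positive along $H$; for the product torpedo metric you propose it is not, as you note. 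Saying the ambient metric ``must be tuned'' or suggesting a Schoen--Yau minimal-hypersurface descent with stability left unverified is a statement of the problem, not a proof, so the statement remains unproved exactly where it is nontrivial.

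For comparison, the paper proves the genus $\geq 2$ case by a direct computation rather than an extrinsic one: give $\Sigma$ the hyperbolic metric, split $TT\Sigma=HT\Sigma\oplus VT\Sigma$ via the Levi-Civita connection, and take the Cheeger--Gromoll-type metric $g^{a,b}$ with horizontal part the hyperbolic metric and vertical part $a(t)g_p(X,Y)+b(t)g_p(X,U)g_p(Y,U)$, $t=\tfrac12 g_p(U,U)$, with the specific choice $a\equiv 0.01$, $b(t)=1+t$. Completeness follows by comparison with the Cheeger--Gromoll metric, and uniform positivity of the scalar curvature follows by plugging $n=2$, $C=-1$ into Munteanu's explicit formula for such metrics over space forms, which yields $Sc(g^{a,b})=-2+1.97t+\tfrac{2+4t}{[0.01+2t(1+t)]^2}\geq C_1>0$: the small constant $a$ makes the last term huge near the zero section, and the $1.97t$ term dominates at infinity. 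Note that this also answers your structural worry about the end being modelled on $ST\Sigma\times\mathbf{R}$ (where $ST\Sigma$ indeed carries no PSC metric): the paper's metric is nowhere close to a product near infinity, since the radial vertical direction is stretched by $b(t)=1+t$ and the cross-sections grow, which is precisely how uniform positivity coexists with the non-PSC topology of the unit tangent bundle. If you want to salvage your approach, you would need to build this kind of anisotropic stretching into the ambient metric $g_W$ and then actually estimate $\mathrm{Ric}_{g_W}(N,N)$ along $H$; as written, that step is missing.
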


\begin{proof}
The total space of tangent bundle of $\mathbf {S}^{2}$ admits complete metrics of positive Ricci curvature and  the Cheeger-Gromoll metric on it is a complete uniformly PSC-metric [$\cite{MR1908215}$, Theorem 1.1, property 3.3]. The trivialization of the tangent bundle $\mathbf{T}^{2}$ makes it carry no complete uniformly PSC-metric as $\mathbf{T}^{2}$ is enlargeable [\cite{MR720933}, Theorem 7.3]. But it admits complete PSC-metrics.\par
    Let $\Sigma$ be the orientable surface with genus at least 2 which is endowed a hyperbolic metric $g$. Then the Levi-Civita connection $\nabla$ on $\Sigma$ induces a natural splitting of the tangent space $T_{(p,U)}T\Sigma$  into
\begin{align*}
T_{(p,u)}T\Sigma=V_{(p,U)}T\Sigma \oplus H_{(p,U)}T\Sigma
\end{align*}
Where $(p,u) \in T\Sigma $, $V_{(p,U)}T\Sigma$ is the vertical space, i.e. $Ker\pi_{\ast(p,U)},  \pi: T\Sigma \to \Sigma$ and $H_{(p,U)}T\Sigma$ is the horizontal space at $(p,U)$ obtained by using $\nabla$. A horizontal curve in $T\Sigma$ corresponds to a vector field on $\Sigma$ which is parallel with respect to connection $\nabla$ on $(\Sigma , g)$. A vector on $T\Sigma$ is horizontal if it is tangent to a horizontal curve and vertical if it is tangent to a fiber. There exists the horizontal (resp. vertical) distribution $HT\Sigma$  (resp.$VT\Sigma$) and the direct sum decomposition: $TT\Sigma = HT \Sigma \oplus VT\Sigma$. This rises to the horizontal and vertical lifts $X^h$, $X^v$ of a vector field $X$ on $\Sigma$. The metric $g^{a,b}$ on $T\Sigma$ be constructed by:
\[\begin{cases}
g_{(p,U)}^{a,b}(X^h,Y^h)=g_p(X,Y), \quad  g_{(p,U)}^{a,b}(X^h,Y^v)=0\\
g_{(p,U)}^{a,b}(X^v,Y^v)=0.01g_p(X,Y)+[1+\frac{1}{2}g_p(U,U)]g_p(X,U)g_p(Y,U)
\end{cases}\]

 The metric induced by $100\times g^{a,b}$ is no less than  the metric induced by Cheeger-Gromoll metric $\cite{MR0309010}$ which is complete. Thus, it implies the completeness of $g^{a,b}$.
Let $t=\frac{1}{2}g_p(U,U)$, then the metric $g^{a,b}$ is a special example of the general metric on $T\Sigma$: 
\[\begin{cases}
\widetilde{g_{(p,U)}^{a,b}}(X^h,Y^h)=g_p(X,Y),\quad \widetilde{g_{(p,U)}^{a,b}}(X^h,Y^v)=0\\
\widetilde{g_{(p,U)}^{a,b}}(X^v,Y^v)=a(t)g_p(X,Y)+b(t)g_p(X,U)g_p(Y,U)
\end{cases}\]
The general metric was firstly defined  in $\cite{MR1726175}$ and Munteanu computed its scalar curvature [$\cite{MR2406440}$, Corollary 2.16]: Let $(M_C^n,g)$ be a space form, then 
\begin{align*}
Sc(\widetilde{g} )&=(n-1)\left\{nC+t[2-3a(t)]C^2-\frac{nF_2+4tF_3}{a(t)}\right\}
\end{align*}
where n is the dimension of $M$, $C$ is the constant of sectional curvature of $(M^n, g)$, 
\begin{align*}
F_2=L-M(1+2tL),\quad F_3=N-(M'+M^2+2tMN)\\
L=\frac{a'(t)}{2a(t)}, \quad M=\frac{2b(t)-a'(t)}{2[a(t)+2tb(t)]}\quad and \quad N=\frac{a(t)b'(t)-2a'(t)b(t)}{2a(t)[a(t)+2tb(t)]}.
\end{align*}
In putting the data $n=2$, $C=-1$, $a=0.01$, $b=1+t$ to the formulas, one gets 
\begin{align*}
L=0, \quad M=\frac{1+t}{0.01+2t(1+t)},\quad N=\frac{1}{2[0.01+2t(1+t)]},\quad M'=\frac{0.01-2(1+t)^2}{[0.01+2t(1+t)]^2}\\
2tMN=\frac{2t(1+t)}{[0.01+2t(1+t)]^2}, \quad F_2=\frac{-(1+t)}{0.01+2t(1+t)}, \quad F_3=\frac{(1+t)^2-0.005}{[0.01+2t(1+t)]^2}
\end{align*}
Then the scalar curvature of $g^{a,b}$ at $(p,U)$ is
\begin{align*}
Sc(g^{a,b})(p,U)&=-2+1.97t+\frac{2+4t}{[0.01+2t(1+t)]^2}\\
&=\frac{7.88t^5+7.76t^4-8.0412t^3-8.0012t^2+3.920197t+1.9998}{[0.01+2t(1+t)]^2}  :=\frac{g(t)}{h(t)}.
\end{align*}

Since $g(t)>1$ for $t\geq 0$ and $g(t)$ increases faster than $h(t)$ when $t$ goes to positive infinity,  $Sc(g^{a,b}) (p,U)\geq C_1>0$ for all $(p,U)\in T\Sigma$. Thus, $g^{a,b}$ is a complete uniformly PSC-metric.
\end{proof}\par

 $\mathbf{Acknowledgment}$: I thank Alexander Engel for pointing out the error on my proof of Rosenberg-Stolz Theorem in the first draft, Thomas Schick and Chao Qian for many  stimulating conversations, Shuqiang Zhu for proof-read and the funding from China Scholarship Council.

\bibliographystyle{alpha}
\bibliography{reference}
\Addresses

\end{document}